\documentclass[12pt]{amsart}
\usepackage{a4wide}
\usepackage{enumerate}
\usepackage{graphicx}
\usepackage{color}
\usepackage{amsmath}
\usepackage{scrextend} 
\allowdisplaybreaks

\let\pa\partial
\let\na\nabla
\let\eps\varepsilon

\newcommand{\R}{{\mathbb R}}
\newcommand{\diver}{\operatorname{div}}

\newtheorem{theorem}{Theorem}

\newtheorem{remark}[theorem]{Remark}
\newtheorem{corollary}[theorem]{Corollary}


\begin{document}

\title[Uniqueness of weak solutions to cross-diffusion systems]{A note on the
uniqueness of weak solutions to a class of cross-diffusion systems}

\author[X. Chen]{Xiuqing Chen}
\address{School of Sciences, Beijing University of Posts and Telecommunications,
Beijing 100876, China}
\email{buptxchen@yahoo.com}

\author[A. J\"ungel]{Ansgar J\"ungel}
\address{Institute for Analysis and Scientific Computing, Vienna University of
	Technology, Wiedner Hauptstra\ss e 8--10, 1040 Wien, Austria}
\email{juengel@tuwien.ac.at}

\date{\today}

\thanks{The first author acknowledges support from the National Natural Science
Foundation of China (NSFC), grant 11471050, and from the China Scholarship Council
(CSC), file no.\ 201706475001, who financed his stay in Vienna.
The second author acknowledges partial support from
the Austrian Science Fund (FWF), grants P27352, P30000, F65, and W1245}

\begin{abstract}
The uniqueness of bounded weak solutions to strongly coupled parabolic equations
in a bounded domain with no-flux boundary conditions is shown. The equations
include cross-diffusion and drift terms and are coupled selfconsistently
to the Poisson equation.
The model class contains special cases of the Maxwell-Stefan equations
for gas mixtures, generalized Shigesada-Kawasaki-Teramoto equations for
population dynamics, and volume-filling models for ion transport.
The uniqueness proof is based on a combination of the
$H^{-1}$ technique and the entropy method of Gajewski.
\end{abstract}

\keywords{Strongly coupled parabolic systems, uniqueness of weak solutions,
entropy method, Maxwell-Stefan systems, population dynamics, volume filling.}

\subjclass[2000]{35A02, 35K51, 35K55.}

\maketitle


\section{Introduction}

Several techniques have been developed for the analysis of nonlinear parabolic
systems, including sufficient conditions for the global existence of weak
or strong solutions \cite{Ama89,Jue16,LSU68,Pie10}. However, the proof of uniqueness
of weak solutions is generally much more delicate, in particular for strongly
coupled systems. In this paper, we prove the uniqueness of bounded weak solutions
to a class of cross-diffusion systems.
The proof is based on a combination of the $H^{-1}$ technique and the
method of Gajewski \cite{Gaj94}, where a certain semimetric measures the
distance between two solutions. It is shown that the semimetric is related
to relative entropies.

\subsection{Model equations}

The equations describe the evolution of the concentrations $u_i$,
\begin{equation}\label{1.eq1}
  \pa_t u_i = \diver\sum_{j=1}^n\big(A_{ij}(u)\na u_j + B_{ij}(u)\na\phi\big),
	\quad i=1,\ldots,n,
\end{equation}
in a bounded domain $\Omega\subset\R^d$ ($d\ge 1$),
where $u=(u_1,\ldots,u_n)$ and $\phi$ is a potential solving the Poisson equation
\begin{equation}\label{1.eq2}
  -\Delta\phi = u_0-f(x)\quad\mbox{in }\Omega,
\end{equation}
where $u_0=\sum_{i=1}^n a_iu_i$ for some constants $a_i\ge 0$,
and $f(x)$ is a given background concentration.
We complement the equations by no-flux boundary and initial conditions,
\begin{equation}\label{1.bic}
  \sum_{j=1}^nA_{ij}(u)\na u_j\cdot\nu = \na\phi\cdot\nu = 0\ \mbox{on }\pa\Omega,
	\quad u(0)=u^0\mbox{ in }\Omega, \quad i=1,\ldots,n.
\end{equation}
For consistency, the initial datum has to satisfy the condition
$$
  \int_\Omega \sum_{i=1}^n a_iu_i^0 dx = \int_\Omega f(x)dx.
$$

The diffusion coefficients $A_{ij}$ and drift coefficients $B_{ij}$ are defined by
\begin{equation}\label{1.AB}
  A_{ij}(u) = p(u_0)\delta_{ij} + a_ju_i q(u_0), \quad
	B_{ij}(u) = r(u_0)u_i\delta_{ij}, \quad i,j=1,\ldots,n,
\end{equation}
for some functions $p$, $q$, and $r$ and numbers $a_j\ge 0$.
Our main assumption is that these functions do not depend on the species number $i$.
Then $u_0$ satisfies a nonlinear drift-diffusion equation (see \eqref{2.eq1} below),
and this property allows us to initiate the uniqueness proof. We do not know how
to relax this assumption in the context of weak solutions.

The diffusion matrix $A(u)=(A_{ij}(u))$ is {\em not} assumed to be positive definite
and it may degenerate. The existence theory developed in \cite{Jue15} is based
on the assumption that there exists a transformation of variables such that the
transformed diffusion matrix becomes positive semidefinite, allowing for
some degeneracy; see \cite{Jue16} for details.

Under some conditions,
model \eqref{1.eq1}, \eqref{1.AB} can be derived formally from a master equation
for a continuous-time, discrete-space random walk in the macroscopic limit
\cite{Ost11,ZaJu17} or from a fluiddynamical model in the inertia approximation
\cite[Section 4.2]{Jue16}. The variables $u_i$
may describe the density of the $i$th population species or the $i$th component
of a gas mixture with electrically charged components. In the former case,
$\phi$ models the environmental potential,
in the latter case, it denotes the electric potential. Because of these
applications, it is reasonable to assume that $u_i\ge 0$ in $\Omega$, $t>0$.

For special choices of $A_{ij}$ and $B_{ij}$, including condition \eqref{1.AB},
the existence of global bounded weak solutions can be shown. We give some examples
and references in section \ref{sec.ex} below. In this paper, we are only concerned
with the uniqueness of weak solutions.

\subsection{State of the art}

Before stating and explaining our assumptions and the main result, let us review
some techniques to show the uniqueness of (weak) solutions to
nonlinear parabolic equations. We focus on generalized solutions since
uniqueness of strong solutions is usually proved by standard $L^{2}$ estimations.

One important technique is based on the use of the test function
$\mbox{sign}_+(u^{(1)}-u^{(2)})$, where $u^{(1)}$ and $u^{(2)}$ are two solutions
and $\mbox{sign}_+$
is the positive sign function ($\mbox{sign}_+(s)=1$ for $s>0$ and $\mbox{sign}_+(s)=0$
else). The use of this test function can be justified by employing the
technique of doubling the variables, first developed by Kru\v{z}kov for
hyperbolic equations \cite{Kru70} and later extended by Carrillo
to scalar parabolic equations \cite{Car99} and by Blanchard and Porretta
to allow for renormalized solutions \cite{BlPo05}.
We refer to the review \cite{AnIg12} for
an extensive bibliography. All these results hold for scalar equations only.

Nonlinear semigroup methods provide powerful abstract tools for proving the
uniqueness of (mild of integral) solutions; see, e.g., \cite{BeWi96}.
However, this approach seems to be generally not accessible to cross-diffusion
systems.

One of the first uniqueness theorems for diffusion systems was shown by Alt and
Luckhaus \cite{AlLu83} under the assumptions that the time derivative of $u_i$
is integrable and the elliptic operator is linear. The first hypothesis was relaxed to
finite-energy solutions by Otto \cite{Ott96}, and the ellipticity condition
was generalized by Agueh using methods from optimal transport \cite{Agu05},
but in both cases for scalar equations only.

Another powerful approach is the dual method which consists in choosing a test function
which satisfies an appropriate dual problem \cite{DGJ01}.
This includes the $H^{-1}$ method,
where a test function of an {\em elliptic} dual problem is chosen.
In some sense, the uniqueness problem is reduced to an existence problem
of the dual problem \cite{Mas03}. The dual method allows one to treat
diffusion systems that are, to some extent, weakly coupled; see, e.g.,
\cite{DGJ01,Jue00,MiSu14}. Based on a dual method, Pham and Temam \cite{PhTe17} proved
recently a uniqueness result for a strongly coupled population system assuming a strictly
positive definite diffusion matrix.

The uniqueness of (weak) solutions may be also proven by using an entropy method.
One idea is to differentiate the relative entropy $H(u^{(1)}|u^{(2)})$, where
$u^{(1)}$ and $u^{(2)}$ are two solutions emanating from the same initial data,
with respect to time and to show that $\frac{d}{dt}H(u^{(1)}|u^{(2)})
\le CH(u^{(1)}|u^{(2)})$ for some constant $C>0$, which implies from Gronwall's
lemma that $H(u^{(1)}|u^{(2)})=0$ and hence $u^{(1)}=u^{(2)}$.
This approach has been used to show the weak-strong uniqueness
for compressible Navier-Stokes equations \cite{FJN12,FeNo12} and reaction-diffusion
systems (with diagonal diffusion matrix) \cite{Fis17}.
A second idea, due to Gajewski \cite{Gaj94}, is to
time-differentiate the semimetric
\begin{equation}\label{1.semimetric}
  d(u^{(1)},u^{(2)}) = H(u^{(1)}) + H(u^{(2)}) - 2H\bigg(\frac{u^{(1)}+u^{(2)}}{2}\bigg)
\end{equation}
for convex entropies $H$ and to show that
$\frac{d}{dt}d(u^{(1)},u^{(2)})\le 0$, implying again
that $u^{(1)}=u^{(2)}$. The technique has been applied to nonlinear drift-diffusion
equations for semiconductors \cite{Gaj94} and later to cross-diffusion systems
\cite{JuZa16,ZaJu17}. Compared to other methods, it has the advantage that
only weak solutions are needed \cite[Chapter 4.7]{Jue16}. The Gajewski method
is related to the approach of using relative entropies; see Remark \ref{rem.comp}.

\subsection{Assumptions and main result}

Our approach is to combine the $H^{-1}$ technique and the method of Gajewski
and to generalize the results from \cite{JuZa16,ZaJu17}.
The novelty is the inclusion of the potential term and the general structure
of $A_{ij}(u)$. With hypothesis \eqref{1.AB}, equations \eqref{1.eq1} can be formulated
as
\begin{equation}\label{1.ui}
  \pa_t u_i = \diver\big(p(u_0)\na u_i + q(u_0)u_i\na u_0 + r(u_0)u_i\na\phi\big),
	\quad i=1,\ldots,n.
\end{equation}
This can be interpreted as a drift-diffusion equation with field term
$q(u_0)\na u_0+r(u_0)\na\phi$. Since $u_0$ depends on all $u_i$, this is still
a cross-diffusion system. However, the drift-diffusion structure is essential
in the uniqueness proof. Our main result is as follows.

\begin{theorem}[Uniqueness of weak solutions]\label{thm1}
Let $(u,\phi)$ with $u=(u_1,$ $\ldots,u_n)$
be a weak solution to \eqref{1.eq1}-\eqref{1.bic}
such that $u_0(x,t)\in[0,L]$ for $x\in\Omega$, $t\in(0,T)$ and some $L>0$. Let
$u^0\in L^\infty(\Omega)$ and $f\in L^{2}(\Omega)$.
We assume that there exists $M>0$ such that for all $s\in[0,L]$,
\begin{align}
  & p(s)\ge 0, \quad p(s)+q(s)s\ge 0, \label{1.cond1} \\
	& r(s)s\in C^{1}([0,L]), \quad \frac{(r(s)+r'(s)s)^{2}}{p(s)+q(s)s}\le M.
	\label{1.cond2}
\end{align}
Then $(u,\phi)$ is unique in the class of solutions satisfying $\int_\Omega\phi dx=0$,
$\na\phi\in L^\infty(0,T;$ $L^\infty(\Omega))$, and
$$
  u_i\in L^{2}(0,T;H^{1}(\Omega)), \quad \pa_t u_i\in L^{2}(0,T;H^{1}(\Omega)'),
	\quad i=1,\ldots,n.
$$
In the case $r\equiv 0$, the boundedness of $u_0$ is not needed, provided that
\begin{equation}\label{1.cond3}
  \sqrt{p(u_0)}\nabla u_i,\ \sqrt{|q(u_0)|}\nabla u_i\in L^2(\Omega\times(0,T)).
\end{equation}

\end{theorem}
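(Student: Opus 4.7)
The plan has three steps, built on the special structure \eqref{1.AB}: since $p,q,r$ do not depend on the species index, multiplying \eqref{1.ui} by $a_i$ and summing yields a closed scalar drift--diffusion equation
\begin{equation*}
  \pa_t u_0 = \diver\bigl(P(u_0)\na u_0 + R(u_0)\na\phi\bigr), \qquad P(s):=p(s)+q(s)s,\ \ R(s):=r(s)s,
\end{equation*}
coupled to \eqref{1.eq2}. Uniqueness for the full system then reduces to uniqueness of the scalar pair $(u_0,\phi)$, and, once these are known, uniqueness of each $u_i$ separately.

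For $(u_0,\phi)$ I combine Gajewski's semimetric \eqref{1.semimetric} with an $H^{-1}$ Poisson estimate. Let $(u_0^{(k)},\phi^{(k)})$, $k=1,2$, be two solutions with the same initial datum, write $\bar u:=\tfrac12(u_0^{(1)}+u_0^{(2)})$, and choose a strictly convex entropy $H\in C^2([0,L])$ adapted to $P$ (concretely, the Boltzmann entropy $H(s):=s\log s-s+1$ works, up to a routine regularisation $s\mapsto s+\delta$ if $u_0$ may vanish). Thanks to the $L^\infty$ bound on $u_0$ and the regularity $\pa_t u_0\in L^2(0,T;H^1(\Omega)')$, $H'(u_0^{(k)})$ and $H'(\bar u)$ are admissible test functions. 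A direct Gajewski-type computation shows that the diffusive contribution to $\tfrac{d}{dt}d$ is a nonpositive quadratic form in the Fisher-information-like vector $u_0^{(2)}\na u_0^{(1)}-u_0^{(1)}\na u_0^{(2)}$ weighted by $P(u_0^{(k)})$, while the drift--potential contribution factors as a cross-product of this same vector against $\na(\phi^{(1)}-\phi^{(2)})$, weighted by $R(u_0^{(k)})$. Young's inequality, sharpened by the balance $(R'(s))^2\le M\,P(s)$ of \eqref{1.cond2}, absorbs this cross-product into the diffusion part and leaves a residual
\begin{equation*}
  C\|\na(\phi^{(1)}-\phi^{(2)})\|_{L^2}^2 = C\int_\Omega(\phi^{(1)}-\phi^{(2)})(u_0^{(1)}-u_0^{(2)})\,dx \le C'\|u_0^{(1)}-u_0^{(2)}\|_{L^2}^2,
\end{equation*}
the first equality being the Poisson ($H^{-1}$) duality step and the last inequality following from Cauchy--Schwarz plus Poincar\'e on zero-mean potentials. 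Since $u_0\in[0,L]$, the bound $H''(s)\ge 1/L$ and a Taylor expansion of $H$ around $\bar u$ give $d(u_0^{(1)},u_0^{(2)})\ge (4L)^{-1}\|u_0^{(1)}-u_0^{(2)}\|_{L^2}^2$, so we arrive at the clean Gronwall inequality $\tfrac{d}{dt}d\le C\,d$. With $d(0)=0$ this forces $u_0^{(1)}=u_0^{(2)}$, whence $\phi^{(1)}=\phi^{(2)}$.

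With $u_0$ and $\phi$ fixed, \eqref{1.ui} is linear in $u_i$. Testing the equation for $w_i:=u_i^{(1)}-u_i^{(2)}$ against $w_i$ yields
\begin{equation*}
  \tfrac12\tfrac{d}{dt}\|w_i\|_{L^2}^2+\int_\Omega p(u_0)|\na w_i|^2\,dx = -\int_\Omega\bigl(q(u_0)w_i\na u_0+r(u_0)w_i\na\phi\bigr)\cdot\na w_i\,dx.
\end{equation*}
The right-hand side is bounded by $\tfrac12\int p(u_0)|\na w_i|^2+C\|w_i\|_{L^2}^2$ via \eqref{1.cond2}, the $L^\infty$ bound on $\na\phi$, and the boundedness of $u_0$; Gronwall then gives $w_i\equiv 0$. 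When $r\equiv 0$ the potential term disappears and the integrability \eqref{1.cond3} replaces the $L^\infty$ bound on $u_0$.

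The principal difficulty is the algebraic bookkeeping in step two: the diffusive and drift--potential pieces of $\tfrac{d}{dt}d$ must be rearranged so that a single application of Young's inequality, calibrated exactly by \eqref{1.cond2}, absorbs the cross-product into the $P(u_0)$-weighted diffusion. Once this absorption is achieved, the Poisson duality and the convexity of $H$ close the Gronwall loop automatically, and the remaining steps are essentially linear and routine.
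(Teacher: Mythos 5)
Your overall reduction (first prove uniqueness of the scalar pair $(u_0,\phi)$, then of the individual $u_i$) is exactly the paper's, but you have assigned the two techniques to the wrong equations, and both of your main steps break down as written. For the $u_0$-equation you apply Gajewski's semimetric and claim that the diffusive contribution to $\frac{d}{dt}d$ is a nonpositive quadratic form. This is false when the diffusion is genuinely nonlinear: with $H(s)=s\log s-s+1$ the diffusive part of $\frac{d}{dt}d$ is the integral of a quadratic form in $(\na u_0^{(1)},\na u_0^{(2)})$ whose determinant equals
$$
  -\frac{(P(u_0^{(1)})-P(u_0^{(2)}))^2}{4(u_0^{(1)}+u_0^{(2)})^2},
$$
so the form is indefinite whenever $P(u_0^{(1)})\neq P(u_0^{(2)})$; it is a perfect (negative) square only for $P=\mbox{const}$. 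The paper makes precisely this point in Remark \ref{rem.comp}: linearity of the diffusion is essential for the entropy method. That is why Step 1 of the paper instead uses the $H^{-1}$ duality: testing with $\phi^{(1)}-\phi^{(2)}$ converts the nonlinear diffusion into the monotone pairing $(Q(u_0^{(1)})-Q(u_0^{(2)}))(u_0^{(1)}-u_0^{(2)})\ge 0$, and it is exactly this nonnegative quantity (not a $P$-weighted Fisher term) that condition \eqref{1.cond2} allows one to use, via the bounded quotient $(R(u_0^{(1)})-R(u_0^{(2)}))^2/\big((Q(u_0^{(1)})-Q(u_0^{(2)}))(u_0^{(1)}-u_0^{(2)})\big)$, to absorb the drift difference. (Your description of the drift term is also incomplete: differencing $R(u_0^{(k)})\na\phi^{(k)}$ produces a term $(R(u_0^{(1)})-R(u_0^{(2)}))\na\phi^{(2)}$, which is where the hypothesis $\na\phi^{(2)}\in L^\infty$ enters.)

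Conversely, for the $u_i$-equations your plain $L^2$ energy estimate does not close. To bound $-\int_\Omega w_iF\cdot\na w_i\,dx$ by $\frac12\int_\Omega p(u_0)|\na w_i|^2dx+C\|w_i\|_{L^2(\Omega)}^2$ you would need $|F|^2/p(u_0)$ to be (at least) integrable in time with values in $L^\infty(\Omega)$, where $F=q(u_0)\na u_0+r(u_0)\na\phi$. But $\na u_0$ is only in $L^2(Q_T)$, $p$ is merely nonnegative and may vanish, and \eqref{1.cond2} controls $(r+r's)^2$ against $p+qs$, not $q^2$ against $p$; no hypothesis of the theorem yields this absorption. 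The paper's use of the regularized Gajewski semimetric for precisely this step is not cosmetic: with the logarithmic test functions the drift contributions reduce to $\int_0^t\int_\Omega F\cdot\na u_i^{(j)}\big(\frac{u_i^{(j)}}{u_i^{(j)}+\eps}-\frac{u_i^{(1)}+u_i^{(2)}}{u_i^{(1)}+u_i^{(2)}+2\eps}\big)dxds$, which vanishes as $\eps\to0$ by dominated convergence, so the merely square-integrable drift never has to be absorbed into the possibly degenerate diffusion. Your auxiliary observations (the lower bound $d\ge (4L)^{-1}\|u_0^{(1)}-u_0^{(2)}\|_{L^2(\Omega)}^2$ and the Poisson duality $\|\na(\phi^{(1)}-\phi^{(2)})\|_{L^2(\Omega)}\le C\|u_0^{(1)}-u_0^{(2)}\|_{L^2(\Omega)}$) are correct but do not repair either gap.
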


\begin{remark}\rm
1.\ The regularity assumption on the potential can be relaxed to $\na\phi\in
L^\infty(0,T;L^\alpha(\Omega))$ for $\alpha>d$ if $p(s)+q(s)s=\mbox{const.}>0$;
see Remark \ref{rem}.

2.\ If $\pa\Omega\in C^{1,1}$ and $f\in L^\alpha(\Omega)$ with $\alpha>d$,
the regularity $\na\phi\in L^\infty(0,T;L^\infty(\Omega))$ is a consequence
of elliptic theory. Indeed, since $u_0$ is bounded,
$u_0-f\in L^\infty(0,T;L^\alpha(\Omega))$, which implies, by Sobolev embedding,
that $\phi\in
L^\infty(0,T;W^{2,\alpha}(\Omega))\hookrightarrow L^\infty(0,T;W^{1,\infty}(\Omega))$.

\qed
\end{remark}

The idea of the proof is first to show the uniqueness of $(u_0,\phi)$.
Indeed, multiplying \eqref{1.eq1} by $a_i$ and summing over $i=1,\ldots,n$ leads to
a nonlinear drift-diffusion equation for $u_0$,
\begin{equation}\label{1.u0}
  \pa_t u_0 = \diver\big(\na Q(u_0) + R(u_0)\na\phi\big),
\end{equation}
coupled with the Poisson equation \eqref{1.eq2}, where
\begin{equation}\label{1.RQ}
  R(s) = r(s)s, \quad Q(s) = \int_0^s (p(\tau)+q(\tau)\tau)d\tau.
\end{equation}
Since the diffusion operator in \eqref{1.u0} may degenerate, it is natural
to apply the $H^{-1}$ technique. Indeed,
given two solutions $(u^{(1)},\phi^{(1)})$, $(u^{(2)},\phi^{(2)})$with the same
initial data, we use the test function
$\phi:=\phi^{(1)}-\phi^{(2)}$ in \eqref{2.eq1}, which solves the dual problem
$-\Delta\phi=u^{(1)}-u^{(2)}$ in $\Omega$, $\na\phi\cdot\nu=0$ on $\pa\Omega$.
Then, using conditions \eqref{1.cond1}-\eqref{1.cond2}, it can be shown that
$\frac{d}{dt}\|\na\phi\|_{L^{2}(\Omega)}^{2}\le C\|\na\phi\|_{L^{2}(\Omega)}^{2}$,
which implies
that $u^{(1)}=u^{(2)}$ and $\phi^{(1)}=\phi^{(2)}$. In this step,
we need the regularity $\na\phi^{(2)}\in L^\infty$.

The second step is to prove the uniqueness of \eqref{1.ui}. For this, we employ
the method of Gajewski \cite{Gaj94}, based on an estimation of the semimetric
$$
  d(u,v) = \sum_{i=1}^n\int_\Omega\bigg(h(u_i)+h(v_i)-2h\bigg(\frac{u_i+v_i}{2}\bigg)
	\bigg)dx,
$$
where $h(s)=s(\log s-1)+1$ is called an entropy.
Let $u^{(1)}=(u_1^{(1)},\ldots,u_n^{(1)})$, $u^{(2)}=(u_1^{(2)},\ldots,u_n^{(2)})$
be two weak solutions
to \eqref{1.ui}. A formal computation shows that
$\frac{d}{dt}d(u^{(1)},$ $u^{(2)})\le 0$ and hence $d(u^{(1)},u^{(2)})=0$.
The convexity of $h$ implies that $u^{(1)}=u^{(2)}$. In order to make this
argument rigorous, we need to regularize the entropy, since terms with $\log u_i$
may be not defined on sets where $u_i=0$. We discuss in Remark \ref{rem.comp}
the applicability of the Gajewski method.

The paper is organized as follows. Theorem \ref{thm1} is proved in
section \ref{sec.proof}. Section \ref{sec.rem} is concerned with some comments
on the techniques and the proof. Some examples satisfying conditions
\eqref{1.cond1}-\eqref{1.cond2} are detailed in section \ref{sec.ex}.


\section{Proof of Theorem \ref{thm1}}\label{sec.proof}

{\em Step 1. Uniqueness for $(u_0,\phi)$.} We multiply \eqref{1.eq1} by $a_i$ and sum
over $i=1,\ldots,n$:
\begin{align}
  \pa_t u_0 &= \sum_{i,j=1}^n\diver\Big(\delta_{ij} p(u_0)a_i\na u_j
	+ q(u_0)a_iu_i\na(a_ju_j) + \delta_{ij} r(u_0)a_iu_i\na\phi\Big) \nonumber \\
	&= \diver\big(p(u_0)\na u_0 + q(u_0)u_0\na u_0 + r(u_0)u_0\na\phi\big) \nonumber \\
	&= \diver\big(\na Q(u_0) + R(u_0)\na\phi\big), \label{2.eq1}
\end{align}
where $Q$ and $R$ are defined in \eqref{1.RQ}. Clearly, it holds
$$
  \na Q(u_0)\cdot\nu=\sum_{i=1}^na_i\sum_{j=1}^nA_{ij}\nabla u_j\cdot\nu=0\quad
	\mbox{on }\pa\Omega.
$$
In view of condition \eqref{1.cond1}, the function $Q$ is nondecreasing.
We use the $H^{-1}$ method to prove that \eqref{1.eq2}, \eqref{2.eq1} possesses
at most one solution. Let $(u_0^{(1)},\phi^{(1)})$ and $(u_0^{(2)},\phi^{(2)})$
be two solutions
to \eqref{1.eq2}, \eqref{2.eq1}, subject to no-flux boundary conditions and
the same initial condition \eqref{1.bic}. We set $u_0=u_0^{(1)}-u_0^{(2)}$ and
$\phi=\phi^{(1)}-\phi^{(2)}$. It holds that $\int_\Omega u_0dx=0$, and $\phi$ solves
$$
  -\Delta\phi = u_0\quad\mbox{in }\Omega, \quad
	\na\phi\cdot\nu=0\quad\mbox{on }\pa\Omega, \quad \int_\Omega\phi dx=0.
$$
Since $u_0\in L^{2}(\Omega\times (0,T))$ and $\pa_t u_0\in L^{2}(0,T;H^{1}(\Omega)')$,
we have $\phi\in L^{2}(0,T;H^{2}(\Omega))$ and
$\pa_t \Delta\phi\in L^{2}(0,T;H^{1}(\Omega)')$. By applying a standard mollification
procedure, we can prove that $t\mapsto \|\na\phi(t)\|_{L^{2}(\Omega)}^{2}$ is
continuous on $[0,T]$ (possibly after redefinition on a set of measure zero) and
$$
  \frac12\frac{d}{dt}\|\na\phi(t)\|_{L^{2}(\Omega)}^{2}
	= -\langle\pa_t\Delta\phi(t),\phi(t)\rangle = \langle\pa_t u_0(t),\phi(t)\rangle,
$$
where $\langle\cdot,\cdot\rangle$ is the dual product between $H^{1}(\Omega)'$ and
$H^{1}(\Omega)$. Observe that at time $t=0$, $-\Delta\phi(0)=0$ and hence,
$\phi(0)=0$. Using $\phi$ as a test function in the difference of the weak
formulations of \eqref{2.eq1} for $u_0^{(1)}$ and $u_0^{(2)}$, respectively,
it follows that
\begin{align}
  \frac12\|\na\phi(t)\|_{L^{2}(\Omega)}^{2}
	&= -\int_0^t\int_\Omega\na\big(Q(u_0^{(1)})-Q(u_0^{(2)})\big)\cdot\na\phi
	dxds \nonumber \\
	&\phantom{xx}{}- \int_0^t\int_\Omega\big(R(u_0^{(1)})\na\phi^{(1)}-R(u_0^{(2)})
	\na\phi^{(2)}\big)\cdot\na\phi dxds \nonumber \\
	&= -\int_0^t\int_\Omega\big(Q(u_0^{(1)})-Q(u_0^{(2)})\big)(u_0^{(1)}-u_0^{(2)})dxds
	\nonumber \\
	&\phantom{xx}{}- \int_0^t\int_\Omega R(u_0^{(1)})|\na\phi|^{(2)} dxds \nonumber \\
	&\phantom{xx}{}-\int_0^t\int_\Omega\big(R(u_0^{(1)})-R(u_0^{(2)})\big)
	\na\phi^{(2)}\cdot\na\phi dxds. \label{2.aux}
\end{align}

The second integral on the right-hand side is estimated as
$$
  \bigg|\int_0^t\int_\Omega R(u_0^{(1)})|\na\phi|^{2} dxds\bigg|
	\le \|R(u_0^{(1)})\|_{L^\infty(0,T;L^\infty(\Omega))}\|\na\phi\|_{L^{2}(Q_T)}^{2}
	\le C_1\|\na\phi\|_{L^{2}(Q_T)}^{2}
$$
for some constant $C_1>0$, where $Q_T=\Omega\times(0,T)$.
This is the only place where the boundedness of $u_0$ is needed.

To estimate the last integral in \eqref{2.aux},
we use the assumption $\na\phi^{(2)}\in L^\infty(0,T;L^\infty(\Omega))$ and
Young's inequality with $\eps>0$:
\begin{align}
  \bigg|\int_0^t\int_\Omega & \big(R(u_0^{(1)})-R(u_0^{(2)})\big)\na\phi^{(2)}
	\cdot\na\phi dxds\bigg|
	\le C_2\int_0^t\int_\Omega\big|R(u_0^{(1)})-R(u_0^{(2)})\big||\na\phi|dxds \nonumber \\
	&\le \int_0^t\int_\Omega\big((Q(u_0^{(1)})-Q(u_0^{(2)}))u_0+\eps\big)dxds \nonumber \\
	&\phantom{xx}{}+ \frac{C_2^{2}}{4}\int_0^t\int_\Omega
	\frac{(R(u_0^{(1)})-R(u_0^{(2)}))^{2}}{(Q(u_0^{(1)})-Q(u_0^{(2)}))u_0+\eps}
	|\na\phi|^{2} dxds.
	\label{2.aux2}
\end{align}
We claim that the quotient is bounded. Indeed, by assumption \eqref{1.cond2},
$(R')^{2}/Q'$ is bounded on $[0,L]$ and hence, by H\"older's inequality,
\begin{align*}
  \bigg(\int_0^{1} R'(\theta u_0^{(1)} + (1-\theta)u_0^{(2)})d\theta\bigg)^{2}
	&\le \int_0^{1}\frac{(R'(\theta u_0^{(1)} + (1-\theta)u_0^{(2)}))^{(2)}}{
	Q'(\theta u_0^{1} + (1-\theta)u_0^{(2)})}d\theta \\
	&\phantom{xx}{}\times\int_0^{1} Q'(\theta u_0^{1} + (1-\theta)u_0^{(2)})d\theta \\
	&\le C_3\int_0^{1}Q'(\theta u_0^{(1)} + (1-\theta)u_0^{(2)})d\theta.
\end{align*}
This shows that
$$
  \frac{(R(u_0^{(1)})-R(u_0^{(2)}))^{2}}{(Q(u_0^{(1)})-Q(u_0^{(2)}))u_0+\eps}
	= \displaystyle\frac{\big(\int_0^{1} R'(\theta u_0^{(1)} + (1-\theta)u_0^{(2)})d\theta
	\big)^{2}u_0^{2}}{\int_0^{1} Q'(\theta u_0^{(1)}
	+ (1-\theta)u_0^{(2)})d\theta u_0^{2}+\eps} \\
  \le C_3.
$$
Then \eqref{2.aux2} becomes
\begin{align*}
  \bigg|\int_0^t\int_\Omega\big(R(u_0^{(1)})-R(u_0^{(2)})\big)\na\phi^{(2)}\cdot\na\phi
	dxds\bigg|
	&\le \int_0^t\int_\Omega\big((Q(u_0^{(1)})-Q(u_0^{(2)}))u_0+\eps\big)dxds \\
	&\phantom{xx}{}+ \frac14 C_2^{2}C_3\int_0^t\int_\Omega|\na\phi|^{2} dxds.
\end{align*}
In the limit $\eps\to 0$, we obtain
\begin{align*}
  \bigg|\int_0^t\int_\Omega\big(R(u_0^{(1)})-R(u_0^{(2)})\big)\na\phi^{(2)}\cdot\na\phi
	dxds\bigg|
	&\le \int_0^t\int_\Omega(Q(u_0^{(1)})-Q(u_0^{(2)}))u_0 dxds \\
	&\phantom{}+ \frac14 C_2^{2}C_3\|\na\phi\|_{L^{2}(Q_T)}^{2}.
\end{align*}
The first integral on the right-hand side is absorbed by the first integral on
the right-hand side of \eqref{2.aux}, and we end up with
$$
  \|\na\phi(t)\|_{L^{2}(\Omega)}^{2} \le C_4\int_0^t\|\na\phi\|_{L^{2}(\Omega)}^{2}ds,
$$
where $C_4=2C_1+C_2^{2}C_3/2$. Finally, by Gronwall's lemma, it follows that
$\na\phi(t)=0$ in $\Omega$ and $u_0(t)=-\Delta\phi(t)=0$. Since
$\int_\Omega\phi(t)dx=0$, we also have $\phi(t)=0$ for $t\in(0,T)$.
This shows that \eqref{1.eq2}, \eqref{2.eq1} is uniquely solvable.

{\em Step 2. Uniqueness for $u_i$.} Let $u^{(1)}=(u_1^{(1)},\ldots,u_n^{(1)})$ and
$u^{(2)}=(u_1^{(2)},\ldots,u_n^{(2)})$ be two weak solutions to \eqref{1.eq1}.
In this step, the solutions are not required to be bounded.
We set $u_0^{(1)}=\sum_{i=1}^n a_iu_i^{(1)}$, $u_0^{(2)}=\sum_{i=1}^n a_iu_i^{(2)}$.
Step 1 shows that $u_0:=u_0^{(1)}=u_0^{(2)}$ and the corresponding potential $\phi$
is unique. Then $u^{(1)}$ and $u^{(2)}$ solve, respectively,
\begin{equation}\label{2.uij}
  \pa_t u_i^{(j)} = \diver\big(p(u_0)\na u_i^{(j)} + u_i^{(j)} F\big), \quad j=1,2,
\end{equation}
with corresponding no-flux and initial conditions, where
$F=q(u_0)\na u_0+r(u_0)\na\phi\in L^2(Q_T)$. Let $0<\eps<1$.
We introduce, as in \cite{ZaJu17}, the regularized entropy
$$
  h_\eps(s) = (s+\eps)\big(\log(s+\eps)-1\big) + 1, \quad s\ge 0,
$$
and the semimetric
$$
  d_\eps(u,v) = \sum_{i=1}^n\int_\Omega\bigg(h_\eps(u_i)+h_\eps(v_i)
	-2h_\eps\bigg(\frac{u_i+v_i}{2}\bigg)\bigg)dx
$$
for appropriate functions $u=(u_1,\ldots,u_n)$, $v=(v_1,\ldots,v_n)$.
Since $h_\eps$ is convex, we have $d_\eps(u,v)\ge 0$.

We recall the following result. Let $0\le w\in L^{2}(0,T;H^{1}(\Omega))\cap
H^{1}(0,T;H^{1}(\Omega)')$. Then $t\mapsto \int_\Omega h_\eps(w(t))dx$ is
absolutely continuous and
$$
  \frac{d}{dt}\int_\Omega h_\eps(w(t))dx = \langle \pa_t w,\log(w+\eps)\rangle.
$$
Therefore, we can differentiate
$t\mapsto d_\eps(u^{(1)}(t),u^{(2)}(t))$, yielding
\begin{align*}
  \frac{d}{dt}d_\eps(u^{(1)},u^{(2)})
	&= \sum_{i=1}^n\bigg(\langle\pa_t u_i^{(1)},\log(u_i^{(1)}+\eps)\rangle
	+ \langle\pa_t u_i^{(2)},\log(u_i^{(2)}+\eps)\rangle \\
	&\phantom{xx}{}- \bigg\langle\pa_t(u_i^{(1)}+u_i^{(2)}),
	\log\bigg(\frac{u_i^{(1)}+u_i^{(2)}}{2}+\eps\bigg)\bigg\rangle\bigg) \\
	&= -\sum_{i=1}^n\int_\Omega\big(p(u_0)\na u_i^{(1)}+u_i^{(1)}F\big)
	\cdot\frac{\na u_i^{(1)}}{u_i^{(1)}+\eps}dx \\
	&\phantom{xx}{}- \sum_{i=1}^n\int_\Omega\big(p(u_0)\na u_i^{(2)}+u_i^{(2)}F\big)
	\cdot\frac{\na u_i^{(2)}}{u_i^{(2)}+\eps}dx \\
  &\phantom{xx}{}+ \sum_{i=1}^n\int_\Omega\big(p(u_0)\na(u_i^{(1)}+u_i^{(2)})
	+ (u_i^{(1)}+u_i^{(2)})F\big)
	\frac{\na(u_i^{(1)}+u_i^{(2)})}{u_i^{(1)}+u_i^{(2)}+2\eps}dx.
\end{align*}
Rearranging the terms, we end up with
\begin{align*}
  \frac{d}{dt}d_\eps(u^{(1)},u^{(2)})
	&= -\sum_{i=1}^n\int_\Omega p(u_0)
	\bigg(\frac{|\na u_i^{(1)}|^{2}}{u_i^{(1)}+\eps}
	+ \frac{|\na u_i^{(2)}|^{2}}{u_i^{(2)}+\eps}
	- \frac{|\na(u_i^{(1)}+u_i^{(2)})|^{2}}{u_i^{(1)}+u_i^{(2)}+2\eps}\bigg)dx \\
	&\phantom{xx}{}- \sum_{i=1}^n\int_\Omega F
	\cdot\na u_i^{(1)}\bigg(\frac{u_i^{(1)}}{u_i^{(1)}+\eps}
	- \frac{u_i^{(1)}+u_i^{(2)}}{u_i^{(1)}+u_i^{(2)}+2\eps}\bigg)dx \\
	&\phantom{xx}{}- \sum_{i=1}^n\int_\Omega F
	\cdot\na u_i^{(2)}\bigg(\frac{u_i^{(2)}}{u_i^{(2)}+\eps}
	- \frac{u_i^{(1)}+u_i^{(2)}}{u_i^{(1)}+u_i^{(2)}+2\eps}\bigg)dx.
\end{align*}
Since for suitable functions $u$, $v$,
$$
  \frac{|\na u|^{2}}{u+\eps}	+ \frac{|\na v|^{2}}{v+\eps}
	- \frac{|\na(u+v)|^{2}}{u+v+2\eps} \\
	= \frac{1}{u+v+2\eps}\bigg|\sqrt{\frac{v+\eps}{u+\eps}}\na u
	- \sqrt{\frac{u+\eps}{v+\eps}}\na v\bigg|^{2},
$$
the first term is nonpositive. Then, integrating in time and observing that
$d_\eps(u^{(1)}(0),$ $u^{(2)}(0))$ $=0$, it follows that
\begin{align}
  d_\eps (u^{(1)}(t),u^{(2)}(t))
	&\le - \sum_{i=1}^n\int_0^t\int_\Omega F
	\cdot\na u_i^{(1)}\bigg(\frac{u_i^{(1)}}{u_i^{(1)}+\eps}
	- \frac{u_i^{(1)}+u_i^{(2)}}{u_i^{(1)}+u_i^{(2)}+2\eps}\bigg)dxds \nonumber \\
	&\phantom{xx}{}- \sum_{i=1}^n\int_0^t\int_\Omega F
	\cdot\na u_i^{(2)}\bigg(\frac{u_i^{(2)}}{u_i^{(2)}+\eps}
	- \frac{u_i^{(1)}+u_i^{(2)}}{u_i^{(1)}+u_i^{(2)}+2\eps}\bigg)dxds. \label{2.aux3}
\end{align}
Expanding $h_\eps(u_i^{(1)})$ and $h_\eps(u_i^{(2)})$ at $(u_i^{(1)}+u_i^{(2)})/2$
up to second order and summing the resulting expressions,
we find that, for some $\theta_i^{(k)}\in (0,1)$ $(k=1,2)$ and
$\xi_i^{(k)}=\theta_i^{(k)}u_i^{(k)}+(1-\theta_i^{(k)})(u_i^{(1)}+u_i^{(2)})/2$,
\begin{align*}
  d_\eps(u^{(1)},u^{(2)})
  &=\frac18\sum_{i=1}^n\int_\Omega\big(h''_\eps(\xi_i^{(1)})
  +h''_\eps(\xi_i^{(1)})\big) (u_i^{(1)}-u_i^{(2)})^{2}dx \\
  \ge &\frac14\sum_{i=1}^n\int_\Omega
	\frac{(u_i^{(1)}-u_i^{(2)})^{2}}{\max\{u_i^{(1)},u_i^{(2)}\}+\eps}dx
	\ge \frac14\sum_{i=1}^n\int_\Omega
  \frac{(u_i^{(1)}-u_i^{(2)})^{2}}{\max\{u_i^{(1)},u_i^{(2)}\}+1}dx.
\end{align*}
Since $F\cdot\na u_i^j\in L^{1}(Q_T)$ for $j=1,2$, we
may apply the dominated convergence theorem giving, as $\eps\to 0$,
$$
  \int_0^t\int_\Omega F
	\cdot\na u_i^j\bigg(\frac{u_i^j}{u_i^j+\eps}
	- \frac{u_i^{(1)}+u_i^{(2)}}{u_i^{(1)}+u_i^{(2)}+2\eps}
	\bigg)dxds \to 0, \quad j=1,2.
$$
Therefore, \eqref{2.aux3} becomes
$$
  0 \le \int_\Omega\frac{(u_i^{(1)}-u_i^{(2)})(t)^2}{
	\max\{u_i^{(1)}(t),u_i^{(2)}(t)\}+1}dx = 0,
$$
and thus, $u_i^{(1)}(t)=u_i^{(2)}(t)=0$ for $t\in(0,T)$ since $u_i^{(j)}(t)$ is
finite a.e.\ in $\Omega$.

If $r\equiv0$ and $u_0$ is not bounded, then we need the integrability 
\eqref{1.cond3} to make the computations rigorous.
This concludes the proof of Theorem \ref{thm1}.


\section{Remarks}\label{sec.rem}

We give two comments on the regularity of the drift term and on the
relation of Gajewski's semimetric to relative entropies.

\begin{remark}[Lower regularity of $\na\phi$]\label{rem}\rm
We claim that the regularity on $\phi$ can be relaxed to
$\na\phi\in L^\infty(0,T;L^\alpha(\Omega))$ with $\alpha>d$ if
$p(s)+q(s)s=D=\mbox{const.}>0$. For simplicity, we assume that $D=1$. 
In this case, we do not need to apply the
$H^{-1}$ method and can use standard $L^{2}$ estimates. Let $(u^{(1)},\phi^{(1)})$ and
$(u^{(2)},\phi^{(2)})$ be two solutions to \eqref{1.eq2}, \eqref{2.eq1} with
the same boundary and initial conditions. Taking $u^{(1)}-u^{(2)}$
as a test function
in \eqref{2.eq1}, we find that
\begin{align}
  \frac12 & \int_\Omega(u_0^{(1)}-u_0^{(2)})^{2}(t) dx
	+ \int_0^t\int_\Omega|\na(u_0^{(1)}-u_0^{(2)})|^{2} dxds \nonumber \\
	&= -\int_0^t\int_\Omega (R(u_0^{(1)})-R(u_0^{(2)}))\na\phi^{(1)}
	\cdot\na(u_0^{(1)}-u_0^{(2)})dxds \nonumber \\
	&\phantom{xx}{}- \int_0^t\int_\Omega R(u_0^{(2)})\na(\phi^{(1)}-\phi^{(2)})
	\cdot\na(u_0^{(1)}-u_0^{(2)})dxds \nonumber \\
	&=: I_1 + I_2. \label{2.aux4}
\end{align}
By the boundedness of $u_i^{(2)}$ and the elliptic estimate for the Poisson equation,
the second integral is estimated as
\begin{align*}
  I_2 &\le C_5\|\na(\phi^{(1)}-\phi^{(2)})\|_{L^{2}(Q_t)}
	\|\na(u_0^{(1)}-u_0^{(2)})\|_{L^{2}(Q_t)} \\
	&\le \frac14\|\na(u_0^{(1)}-u_0^{(2)})\|_{L^{2}(Q_t)}^{2}
	+ C_6\|u_0^{(1)}-u_0^{(2)}\|_{L^{2}(Q_t)}^{2},
\end{align*}
where $Q_t=\Omega\times(0,t)$ and $C_5>0$ depends on the $L^\infty$ norm of $u_0^{(2)}$.
For the first integral $I_1$, we employ the Lipschitz continuity of $R$,
he Cauchy-Schwarz inequality, the H\"older inequality,
the Gagliardo-Nirenberg inequality with $\theta=d/2-d/\beta\in(0,1)$,
and eventually the Young inequality with parameter $\theta$:
\begin{align*}
  I_1 &\le \frac14\|\na(u_0^{(1)}-u_0^{(2)})\|_{L^{2}(Q_t)}^{2}
	+ C_7\int_0^t\|u_0^{(1)}-u_0^{(2)}\|_{L^\beta(\Omega)}^{2}
	\|\na\phi^{(1)}\|_{L^\alpha(\Omega)}^{2}ds \\
	&\le \frac14\|\na(u_0^{(1)}-u_0^{(2)})\|^{2}_{L^{2}(Q_t)}
    + C_8\big(\|\na\phi^{(1)}\|_{L^\infty(0,T;L^\alpha(\Omega)}\big) \\
  &\phantom{xx}{}\times\int_0^t\big(\|\na(u_0^{(1)}-u_0^{(2)})
	\|_{L^{2}(\Omega)}^{2\theta}
  \|u_0^{(1)}-u_0^{(2)}\|_{L^{2}(\Omega)}^{2(1-\theta)}
	+ \|u_0^{(1)}-u_0^{(2)}\|_{L^{2}(\Omega)}^{2}\big)ds \\
	&\le \frac12\|\na(u_0^{(1)}-u_0^{(2)})\|_{L^{2}(Q_t)}^{2}
	+ C_9\big(\|\na\phi^{(1)}\|_{L^\infty(0,T;L^\alpha(\Omega)}\big)
	\int_0^t\|u_0^{(1)}-u_0^{(2)}\|_{L^{2}(\Omega)}^{2} ds.
\end{align*}
Therefore, \eqref{2.aux} becomes
$$
  \|(u_0^{(1)}-u_0^{(2)})(t)\|_{L^{2}(\Omega)}^{2}
	\le C_{10}\int_0^t\|u_0^{(1)}-u_0^{(2)}\|_{L^{2}(\Omega)}^{2}ds,
$$
and Gronwall's lemma shows that $(u_0^{(1)}-u_0^{(2)})(t)=0$ in $\Omega$, $t>0$.
\qed
\end{remark}

\begin{remark}[Comparison of Gajewski's semimetric and relative entropies]\
\label{rem.comp}\rm
In the sec\-ond step of the proof of Theorem \ref{thm1}, we may work with
another semimetric, based on the relative entropy
$$
  H(u|v) = H(u) - H(v) - H'(v)\cdot(u-v),
$$
as done in \cite{Fis17}, where $H(u)=\sum_{i=1}^n\int_\Omega h(u_i)dx$ with
$h(u_i)=u_i(\log u_i-1)+1$.
Setting $h(u)=(h(u_1),...,h(u_n))$ by a slight abuse of notation,
we see that $h:\R^n\to\R^n$ is a convex function.
Instead of the expression from \cite{Fis17}, we
use its symmetrized version to obtain a semimetric:
\begin{equation}\label{d0}
  d_0(u,v) = H(u|v) + H(v|u) = \int_\Omega(h'(u)-h'(v))\cdot(u-v)dx.
\end{equation}
The semimetrics \eqref{1.semimetric} and \eqref{d0} are strongly related although
they are different. First, both expressions behave like $|u-v|^2$ for ``small''
$|u-v|$, since a Taylor expansion shows that both semimetrics can
be estimated from below by, up to a factor, $(u-v)^\top h''(\xi)(u-v)$, where
$h''(\xi)$ is the Hessian of $h$ at some point $\xi\in\R^n$. Second, when
differentiating $d_0(u^{(1)},u^{(2)})$ with respect to time and inserting
\eqref{1.eq1}, the drift terms cancel, as they do when differentiating
$d(u^{(1)},u^{(2)})$. A formal computation shows that
$$
  \frac{d}{dt}d_0(u^{(1)},u^{(2)})
	= -\sum_{i=1}^n\int_\Omega p(u_0)(u_i+v_i)\bigg|\na\log\frac{u_i}{v_i}\bigg|^2 dx
	\le 0,
$$
implying that $u^{(1)}=u^{(2)}$. In order to make this argument rigorous,
we need to work as in section \ref{sec.proof} with a regularization
(replacing $u_i^{(j)}$ by $u_i^{(j)}+\eps$).

In fact, the previous argument can be generalized to 
the following family of semimetrics. Let $d_1(u,v)=\int_\Omega g(u,v)dx$
for some smooth symmetric convex function $g$ and let $u^{(1)}$ and $u^{(2)}$
be two solutions to the scalar equation
\begin{equation}\label{3.u}
  \pa_t u = \diver(a(x)\na u + uF(x)),
\end{equation}
which resembles \eqref{2.uij}, with no-flux boundary conditions and the same
initial condition. We assume that $a(x)\ge 0$ and $F(x)\in\R^n$. Set
$$
  g_{11}=\frac{\pa^2 g}{\pa u^2}(u^{(1)},u^{(2)}), \quad
  g_{12}=\frac{\pa^2 g}{\pa u\pa v}(u^{(1)},u^{(2)}), \quad
  g_{22}=\frac{\pa^2 g}{\pa v^2}(u^{(1)},u^{(2)}).
$$	
Then, formally,
\begin{align*}
  \frac{d}{dt}d_1(u^{(1)},u^{(2)})
	&= -\int_\Omega a(x)\big(g_{11}|\na u^{(1)}|^2
	+ 2g_{12}\na u^{(1)}\cdot\na u^{(2)} + g_{22}|\na u^{(2)}|^2\big)dx \\
	&\phantom{xx}{}- \int_\Omega F(x)\cdot\big((u^{(1)}g_{11}+u^{(2)}g_{12})\na u
	+ (u^{(1)}g_{12}+u^{(2)}g_{22})\na u^{(2)}\big)dx.
\end{align*}
Since $g$ is convex, the first integral is nonnegative. If we assume that
\begin{equation}\label{3.g}
  u\frac{\pa^2 g}{\pa u^2} + v\frac{\pa^2 g}{\pa u\pa v} = 0\quad
	\mbox{for all }u,v,
\end{equation}
then the second integral vanishes (using the symmetry of $g$) and consequently,
$\frac{d}{dt}d_1(u^{(1)},$ $u^{(2)})\le 0$, which implies that $u^{(1)}=u^{(2)}$.
The integrands of the semimetrics \eqref{1.semimetric} and \eqref{d0} satisfy
condition \eqref{3.g}.
This argument shows that the linearity in the diffusion term of \eqref{3.u}
is essential for the entropy method.
\qed
\end{remark}


\section{Examples}\label{sec.ex}

Theorem \ref{thm1} can be applied to some cross-diffusion systems arising in
applications.


\subsection{Maxwell-Stefan equations}

The first example are the Maxwell-Stefan equations \cite{Max66,Ste71}
\begin{equation}\label{3.MS}
  \pa_t u_i + \diver J_i = 0, \quad
	\na u_i = -\sum_{j=1,\,j\neq i}^{n+1}d_{ij}(u_jJ_i-u_iJ_j),
	\quad i=1,\ldots,n+1,
\end{equation}
where $J_i$ are the fluxes and $d_{ij}$ the diffusion coefficients.
For a formal derivation, see \cite[Section 4.2]{Jue16}.
We assume that the sum of all concentrations is constant, $\sum_{i=1}^{n+1}u_i=1$,
which implies that $\sum_{i=1}^{n+1}J_i=0$.
In contrast to \eqref{1.eq1}, the fluxes are not a linear combination of the
gradients $\na u_i$, and we need to invert the flux-gradient relations.
However, because of $\sum_{i=1}^{n+1}J_i=0$, the relations cannot be directly
inverted. One idea is to remove the variable $u_{n+1}=1-\sum_{i=1}^n$, ending up
with $n$ equations, formulated as $\na u'=A_0J'$ \cite{JuSt12},
where $u'=(u_1,\ldots,u_n)$,
$J'=(J_1,\ldots,J_n)$, and $A_0=(A_{ij}^0)\in\R^{n\times n}$ with
\begin{align*}
  A_{ij}^0 &= -(d_{ij}-d_{i,n+1})u_i, \quad i\neq j,\ i,j=1,\ldots,n,  \\
	A_{ii}^0 &= \sum_{j=1,\,j\neq i}^n(d_{ij}-d_{i,n+1})u_{j} + d_{i,n+1},
	\quad i=1,\ldots,n.
\end{align*}
is invertible. The existence of global bounded weak solutions was shown in
\cite{JuSt12}.

\begin{corollary}[Maxwell-Stefan model]
Let $d_{ij}=D_0$ and $d_{i,n+1}=D$ for $i,j=1,\ldots,n$. Then the Maxwell-Stefan
system \eqref{1.bic}, \eqref{3.MS} has at most one weak solution.
\end{corollary}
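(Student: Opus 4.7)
The plan is to recast the Maxwell-Stefan system, under the constant-diffusivity hypothesis, in the form \eqref{1.ui} with $r\equiv 0$ and then invoke Theorem~\ref{thm1}. Choose $a_i=1$ for $i=1,\ldots,n$, so that $u_0:=\sum_{i=1}^n u_i=1-u_{n+1}\in[0,1]$ automatically satisfies the boundedness assumption with $L=1$. Inserting $d_{ij}=D_0$ (for $i,j\le n$, $i\neq j$) and $d_{i,n+1}=D$ into the flux--gradient identity in \eqref{3.MS} and using $\sum_{j=1}^{n+1}J_j=0$, a direct expansion of $\sum_{j\neq i}d_{ij}u_j$ and $\sum_{j\neq i}d_{ij}J_j$ yields
\[
  \nabla u_i = -\bigl(D+(D_0-D)u_0\bigr)J_i - (D_0-D)u_i\,J_{n+1},\qquad i=1,\ldots,n.
\]
Summing this over $i$ and invoking $\sum_{i=1}^n J_i=-J_{n+1}$, the coefficient collapses via $\bigl[D+(D_0-D)u_0\bigr]-(D_0-D)u_0=D$ to give $\nabla u_0=D\,J_{n+1}$. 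Substituting $J_{n+1}=\nabla u_0/D$ back, solving for $J_i$, and inserting into $\pa_t u_i+\diver J_i=0$ produces \eqref{1.ui} with
\[
  p(s)=\frac{1}{D+(D_0-D)s},\qquad q(s)=\frac{D_0-D}{D\bigl(D+(D_0-D)s\bigr)},\qquad r\equiv 0.
\]

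The verification of the hypotheses of Theorem~\ref{thm1} is then immediate. For $D,D_0>0$ and $s\in[0,1]$ one has $D+(D_0-D)s=(1-s)D+sD_0>0$, hence $p(s)>0$, and the key cancellation
\[
  p(s)+q(s)s \;=\; \frac{D+(D_0-D)s}{D\bigl(D+(D_0-D)s\bigr)} \;=\; \frac{1}{D}>0
\]
establishes \eqref{1.cond1}. Because $r\equiv 0$, condition \eqref{1.cond2} holds trivially with $M=0$; moreover $B_{ij}\equiv 0$, so the drift term $\nabla\phi$ drops out of \eqref{1.eq1} entirely and the $L^\infty$-hypothesis on $\nabla\phi$ plays no role (the Poisson coupling simply decouples, or one takes $f\equiv 0$). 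Theorem~\ref{thm1} then yields uniqueness.

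The only nontrivial piece of work is the algebraic inversion producing the explicit formula for $J_i$. It is made transparent by the rank-one-plus-identity structure $A_0=\bigl(D+(D_0-D)u_0\bigr)I-(D_0-D)u\mathbf{1}^\top$ of the matrix from the introduction (essentially the Sherman--Morrison setting), a structure which exists precisely because the $d_{ij}$ with $i,j\le n$ are all equal. Without this symmetry, the reduction to a single scalar nonlinear diffusion equation for $u_0$ that drives Step~1 of Theorem~\ref{thm1} would no longer be available, which is exactly why the corollary is stated under the present restricted hypothesis.
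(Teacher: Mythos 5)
Your proposal is correct and follows essentially the same route as the paper: take $a_i=1$, $r\equiv 0$, and the same coefficients $p(s)=1/(D+(D_0-D)s)$ and $q(s)=(D_0-D)/\bigl(D(D+(D_0-D)s)\bigr)$, then verify \eqref{1.cond1} via the cancellation $p(s)+q(s)s=1/D$. The only cosmetic difference is that you invert the flux--gradient relation by first deriving $\nabla u_0=D\,J_{n+1}$ and eliminating, whereas the paper writes down the inverse matrix $A_0^{-1}$ explicitly; both yield the same $p$ and $q$.
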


\begin{proof}
By assumption, we have
$$
  A_{ij}^0 = \delta_{ij}\bigg(D+(D_0-D)\sum_{k=1,\,k\neq i}^{n}u_k\bigg)
	-(1-\delta_{ij})(D_0-D)u_i.
$$
A computation shows that the inverse $A(u)=A_0^{-1}$ is given by
$$
  A_{ij}(u) = \frac{\delta_{ij}D+(D_0-D)u_i}{D^{2}+D(D_0-D)\sum_{k=1}^n u_i}.
$$
This expression is of the form \eqref{1.AB} with $a_i=1$ and
$$
  p(s) = \frac{D}{D^{2}+D(D_0-D)s},
	\quad q(s) = \frac{D_0-D}{D^{2}+D(D_0-D)s}.
$$
The assumptions of Theorem \ref{thm1} are satisfied since $r(s)=0$ and
$$
  p(s) \ge \frac{1}{\max\{D_0,D\}} > 0, \quad
	p(s) + q(s)s = \frac{1}{D} > 0.
$$
This concludes the proof.
\end{proof}


\subsection{Shigesada-Kawasaki-Teramoto equations}

The second example is the Shigesada-Kawasaki-Teramoto system \eqref{1.eq1} arising in
population dynamics \cite{SKT79} with coefficients
\begin{equation}\label{3.SKT}
  A_{ij}(u) = \delta_{ij}\bigg(a_{i0} + \sum_{j=1}^n a_{ij}u_j\bigg)
	+ a_{ij}u_i, \quad B_{ij}(u) = \delta_{ij}u_i, \quad i,j=1,\ldots,n,
\end{equation}
where $a_{ij}>0$ for $i=0,\ldots,n$, $j=1,\ldots,n$.
The variables $u_i$ model population densities of interacting species
subject to some environmental potential. A formal derivation was given in
\cite[Section 4.2]{Jue16}.
The existence of global weak solutions was proved in \cite{CDJ16} (with $B_{ij}=0$)
under the assumption that there exists a vector $(\pi_1,\ldots,\pi_n)$
such that the detailed-balance condition
$\pi_i a_{ij}=\pi_j a_{ij}$ for all $i,j=1,\ldots,n$ holds or the
self-diffusion $a_{ii}$ dominates cross-diffusion $a_{ij}$ ($i\neq j$).
Under additional conditions and for $n=2$, the weak solutions are bounded
\cite{JuZa16}.

\begin{corollary}[Population dynamics model]
Let $a_{i0}=a_0>0$ and $a_{ij}=a_j>0$ for $i,j=1,\ldots,n$.
Then \eqref{1.eq1}-\eqref{1.bic}, \eqref{3.SKT} has at most one bounded weak solution.
\end{corollary}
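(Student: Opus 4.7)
The plan is to realize the coefficients \eqref{3.SKT}, under the specialization $a_{i0}=a_0$, $a_{ij}=a_j$, as an instance of the structure \eqref{1.AB}, and then to read off the functions $p$, $q$, $r$ and verify conditions \eqref{1.cond1}--\eqref{1.cond2}. Uniqueness then follows by a direct application of Theorem \ref{thm1}.

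First I would write $u_0=\sum_{j=1}^n a_j u_j$, exactly as in Theorem \ref{thm1}. With the specialized coefficients, \eqref{3.SKT} becomes
$$
  A_{ij}(u) = \delta_{ij}\bigg(a_0+\sum_{k=1}^n a_k u_k\bigg) + a_j u_i
  = \delta_{ij}(a_0+u_0) + a_j u_i,
  \qquad B_{ij}(u) = \delta_{ij} u_i,
$$
which is precisely of the form \eqref{1.AB} with
$$
  p(s) = a_0 + s, \qquad q(s) \equiv 1, \qquad r(s) \equiv 1.
$$

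Second, I would check the two hypotheses of Theorem \ref{thm1}. For \eqref{1.cond1}, since $a_0>0$ and $s\ge 0$, both $p(s)=a_0+s\ge a_0>0$ and $p(s)+q(s)s=a_0+2s\ge a_0>0$. For \eqref{1.cond2}, $r(s)s=s\in C^1([0,L])$, and
$$
  \frac{(r(s)+r'(s)s)^2}{p(s)+q(s)s} = \frac{1}{a_0+2s} \le \frac{1}{a_0},
$$
so one may take $M=1/a_0$. Because the weak solution $u$ is assumed bounded, $u_0=\sum_j a_j u_j$ is bounded, so there exists $L>0$ with $u_0(x,t)\in[0,L]$ a.e., matching the hypothesis of Theorem \ref{thm1}; the remaining functional-analytic regularity ($u_i\in L^2(0,T;H^1(\Omega))$ with $\partial_t u_i\in L^2(0,T;H^1(\Omega)')$ and $\nabla\phi\in L^\infty(0,T;L^\infty(\Omega))$ under the assumption $f\in L^\alpha(\Omega)$, $\alpha>d$) is part of the standing notion of bounded weak solution (compare the Remark after Theorem \ref{thm1}).

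Since the hypotheses of Theorem \ref{thm1} are met, uniqueness of the bounded weak solution $(u,\phi)$ follows. There is essentially no obstacle in the argument: the content of the corollary lies entirely in recognizing that the specialized SKT diffusion matrix fits the rank-one-perturbation-of-diagonal structure \eqref{1.AB}, which is forced by the requirement that $p$, $q$, $r$ do not depend on $i$. The only modest point worth flagging is that the general SKT model with distinct coefficients $a_{ij}$ does \emph{not} reduce to \eqref{1.AB}; Theorem \ref{thm1} covers precisely the symmetrized subclass treated here.
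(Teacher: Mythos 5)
Your proposal is correct and coincides with the paper's own argument: the paper likewise obtains the corollary from Theorem \ref{thm1} by identifying $p(s)=a_0+s$, $q(s)=1$, $r(s)=1$, and your explicit verification of \eqref{1.cond1}--\eqref{1.cond2} fills in exactly the routine checks the paper leaves implicit.
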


Note that under the conditions of the corollary,
the detailed-balance condition is satisfied with $\pi_i=a_i$.
The corollary follows from Theorem \ref{thm1} by setting $p(s)=a_0+s\ge a_0>0$,
$q(s)=1$, and $r(s)=1$.


\subsection{A volume-filling model for ion transport}

The ion-transport model is defined by
\begin{equation}\label{3.vf}
  A_{ij}(u) = D_iu_i\quad\mbox{for }i\neq j, \quad
	A_{ii}(u)=D_i(1-u_0+u_i), \quad B_{ij} = z_i(1-u_0)u_i\delta_{ij},
\end{equation}
where $u_0=\sum_{i=1}^n u_i$ and $D_i>0$, $z_i\in\R$ are some constants \cite{BSW12}.
The variables $u_i$ represent the ion concentraton of the $i$th species and
$u_{n+1}:=1-u_0$ the solvent concentration.
The model can be derived formally from a random-walk lattice model
\cite{Ost11,Jue16}. The existence of global bounded weak solutions
was shown in \cite{ZaJu17} without potential and in \cite{GeJu17} including
the potential term.
Formulation \eqref{1.AB} is obtained for $D_i=D>0$ and $z_i=z\in\R$
by setting $a_i=1$, $p_i(s)=D(1-s)$, $q_i(s)=D$, and $r_i(s)=z(1-s)$.
The following result was already proved in \cite{GeJu17}. We show here that
the model fits in our framework.

\begin{corollary}[Ion-transport model]
Let $D_i=D>0$ and $z_i=z\in\R$ for $i=1,\ldots,n$.
Then \eqref{1.eq1}-\eqref{1.bic}, \eqref{3.vf} has at most one bounded weak solution
with $\na\phi\in L^\infty(0,T;L^\alpha(\Omega))$ and $\alpha>d$.
\end{corollary}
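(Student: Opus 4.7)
The plan is to show that the volume-filling model, under the stated hypotheses $D_i=D>0$ and $z_i=z\in\R$, falls within the framework of Theorem \ref{thm1} after identifying the appropriate choice of $a_i$ and the functions $p$, $q$, $r$. Since the regularity assumed on $\na\phi$ is only $L^\infty(0,T;L^\alpha(\Omega))$ with $\alpha>d$ (weaker than the $L^\infty$ demanded in Theorem \ref{thm1}), the final step is to invoke Remark \ref{rem}, which covers exactly the case when $p(s)+q(s)s$ is a positive constant.

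First, I would set $a_i=1$ for all $i$, so that $u_0=\sum_{i=1}^n u_i$ agrees with the definition already used in \eqref{3.vf}, and take $p(s)=D(1-s)$, $q(s)=D$, $r(s)=z(1-s)$. Substituting these choices into \eqref{1.AB} gives $A_{ii}(u)=p(u_0)+u_i q(u_0)=D(1-u_0)+Du_i$, $A_{ij}(u)=u_iq(u_0)=Du_i$ for $i\neq j$, and $B_{ij}(u)=r(u_0)u_i\delta_{ij}=z(1-u_0)u_i\delta_{ij}$, recovering \eqref{3.vf} exactly.

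Next, I would read off $L=1$ from the boundedness clause of the corollary (in the volume-filling setting a bounded weak solution satisfies $u_0\in[0,1]$ a.e.) and verify the remaining hypotheses on $[0,1]$. Condition \eqref{1.cond1} reduces to $p(s)=D(1-s)\ge 0$ and $p(s)+q(s)s=D(1-s)+Ds=D>0$, both obvious. For \eqref{1.cond2}, one has $r(s)s=z(1-s)s\in C^1([0,1])$, and a direct computation using $r'(s)=-z$ gives
$$
  \frac{(r(s)+r'(s)s)^2}{p(s)+q(s)s}
  =\frac{(z(1-s)-zs)^2}{D}
  =\frac{z^2(1-2s)^2}{D}\le \frac{z^2}{D}=:M,
$$
so the required bound holds with this explicit $M$.

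Finally, since $p(s)+q(s)s\equiv D$ is a positive constant, Remark \ref{rem} applies and permits the relaxation $\na\phi\in L^\infty(0,T;L^\alpha(\Omega))$ with $\alpha>d$. Combining Theorem \ref{thm1} with this relaxation then yields uniqueness of the bounded weak solution. Honestly, there is no real analytical obstacle here: the argument is a verification, and the only point requiring a moment of care is matching the constant-$p+qs$ condition of Remark \ref{rem} to justify the weaker $L^\alpha$ regularity of $\na\phi$ stated in the corollary rather than the $L^\infty$ regularity in the main theorem.
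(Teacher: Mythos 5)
Your verification is correct and follows exactly the paper's route: identify $a_i=1$, $p(s)=D(1-s)$, $q(s)=D$, $r(s)=z(1-s)$, check \eqref{1.cond1}--\eqref{1.cond2} on $[0,1]$ (your explicit bound $(r(s)+r'(s)s)^2/(p(s)+q(s)s)=z^2(1-2s)^2/D\le z^2/D$ is a welcome detail the paper glosses over), and invoke Remark \ref{rem} because $p(s)+q(s)s\equiv D$ is a positive constant to get uniqueness under the weaker $\na\phi\in L^\infty(0,T;L^\alpha(\Omega))$, $\alpha>d$. No gaps; this matches the paper's proof.
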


\begin{proof}
Conditions \eqref{1.cond1}-\eqref{1.cond2} are satisfied since
$p(s)+q(s)s=D>0$ and $r(s)$ is continuous on $[0,1]$.
By Remark \ref{rem}, the uniqueness result holds for potentials satisfying
$\na\phi\in L^\infty(0,T;L^\alpha(\Omega))$ with $\alpha>d$.
\end{proof}


%


\end{document}